\pgfplotsset{compat=1.15}
\newcommand{\blue}{\textcolor{black}}
\DeclareMathOperator*{\argmin}{arg\,min}
\newtheorem{theorem}{Theorem}
\newtheorem{lemma}{Lemma}
\begin{document}
\title{On Robust Hypothesis Testing with respect to the Hellinger Distance}
	
\author[]{Eeshan Modak}
\affil[]{Tata Institute of Fundamental Research, Mumbai, India}

\author[]{Sivaraman Balakrishnan}
\affil[]{Carnegie Mellon University}

\author[]{Ananda Theertha Suresh}
\affil[]{Google Research, New York}

\maketitle



\begin{abstract}
    We study a variant of the simple hypothesis testing problem where observed samples do not necessarily come from either of the specified distributions, but rather from a close variant of them. In this setting, we require a test that is robust to misspecification and identifies which distribution is closer in Hellinger distance. If the underlying distribution is nearly equidistant from both hypotheses, the problem becomes intractable. Our main result is a lower bound on the slack factor, which quantifies how much closer the underlying distribution must be to one hypothesis relative to the other for any test to remain robust. We also demonstrate the implications of this result for testing with respect to symmetric chi-squared distance. Finally, we study an alternative way to specify robustness, where each hypothesis is a Hellinger ball around a fixed distribution. We provide and analyze a test for this composite hypothesis testing problem.
\end{abstract}

\section{Introduction}
Hypothesis testing is a classical problem in statistics and has been studied for more than a century. In its simplest form, we have the following problem: Given two distributions $p_0$ and $p_1$ over some domain $\mathcal{X}$, we observe $m$ independent and identically distributed (i.i.d.) samples $X_1, \ldots, X_m$ from a distribution $p \in \{p_0,p_1\}$. We have to decide whether $p$ is $p_0$ or $p_1$.
\begin{align*}
    H_0 &: p = p_0 \\
    H_1 &: p = p_1.
\end{align*}
There are two types of error that can occur. The Type-I error is the probability of declaring $H_1$ under $H_0$ and the Type-II error is the probability of declaring $H_0$ under $H_1$. Naturally, there is a tradeoff between the two errors. The Neyman-Pearson test achieves the optimal tradeoff. It has the following form: Fix some threshold $t$ and declare $H_0$ if the likelihood ratio $\frac{p_0(X^m)}{p_1(X^m)} \ge t$, declare $H_1$ otherwise.

\subsection{Robust Hypothesis Testing}
In many real-world contexts, the assumption that $p \in \{p_0,p_1\}$ might be too restrictive. This could be due to noise in the sampling process or an imperfection in the modeling.
The goal of robust testing is to be resistant to violations of such normative assumptions. In composite hypothesis testing, each hypothesis is associated with a set of distributions (say $\mathcal{P}_0$ under $H_0$ and $\mathcal{P}_1$ under $H_1$) and samples are i.i.d. according to some fixed distribution from the set. For distributions $p_0' \in \mathcal{P}_0$, $p_1' \in \mathcal{P}_1$ and a test $T$, let $e(T,p_0',p_1')$ be the maximum of type I and type II error. We want a test that is $\min \max$ optimal, i.e., $\inf_{T} \sup_{p_0'\in \mathcal{P}_0,p_1'\in \mathcal{P}_1} e(T,p_0',p_1')$.
Huber~\cite{huber1965robust} considered the following model: $\mathcal{P}_0= \{p: p=(1-\epsilon) p_0 + \epsilon q, q \in \Delta_{\mathcal{X}}\}$, $\mathcal{P}_1= \{p: p=(1-\epsilon) p_1 + \epsilon q, q \in \Delta_{\mathcal{X}}\}$
where $\epsilon$ is some small positive constant and $\Delta_{\mathcal{X}}$ is the set of distributions over $\mathcal{X}$. \blue{They show that the optimal test is a clipped likelihood ratio test given by $\max\{c_0, \min \{ c_1, \frac{p_1(X)}{p_0(X)} \} \}$ where $c_0,c_1$ depend on $p_0,p_1$ and $\epsilon$.} Levy~\cite{levy2008robust}, G{\"u}l and Zoubir~\cite{gul2017minimax} study models where $\mathcal{P}_0 = \{p: D(p\|p_0) \le \epsilon, p \in \Delta_{\mathcal{X}} \}$ and $\mathcal{P}_1 = \{p: D(p\|p_1) \le \epsilon, p \in \Delta_{\mathcal{X}} \}$ (here, $D(p\|p_0)$ denotes the Kullback-Leibler (KL) divergence between $p$ and $p_0$ given by $\int_{x}p(x) \log \frac{p(x)}{p_0(x)} dx )$.
\subsection{Robustness to imperfect modelling}
In many practical settings, our hypotheses are merely our models for the observed data. However, the true distribution could be different from both hypotheses. Consider the following problem: Say we have the following hypotheses to model the number of applicants for a particular job: 
\begin{align*}
    H_0 &: \text{Pois}(\lambda_0) \\
    H_1 &: \text{Pois}(\lambda_1).
\end{align*}
Here, $\text{Pois}(\lambda)$ denotes the Poisson distribution with parameter $\lambda$. Now, the actual distribution of the number of applicants could be some unknown complicated distribution. In such a case, we would like our test to output the hypothesis that is closer (with respect to some distance metric) to the true distribution. Thus, we can formulate the problem as follows. Let $p_0$ and $p_1$ be the two model distributions and $p$ be the true distribution \blue{(all defined on a common domain $\mathcal{X}$). Let $d(.,.)$
be a distance metric on the space of distributions on $\mathcal{X}$}. After observing i.i.d. samples from $p$, we need to distinguish between
\begin{align*}
    H_0 &: d(p,p_0) < d(p,p_1) \\
    H_1 &: d(p,p_0) > d(p,p_1).
\end{align*}
If the distribution $p$ is arbitrarily close to being equidistant from $p_0$ and $p_1$, then it is impossible to output the closer distribution using finitely many samples. Hence, we introduce a slack factor $\gamma > 1$ and reformulate the problem.
\begin{align*}
    H_0 &: \gamma d(p,p_0) < d(p,p_1) \\
    H_1 &: d(p,p_0) > \gamma d(p,p_1).
\end{align*}
If $p$ is not in $H_0$ or in $H_1$, then the test can make any decision. The error is defined to be the maximum of type-I and type-II errors. We call a test $\gamma$-robust if it uses finitely many samples and keeps the probability of error below some fixed constant $\delta<\frac{1}{2}$ for any choice of $p_0$, $p_1$, and $p$. Let $\gamma^*$ be the smallest $\gamma$ for which a $\gamma$-robust test exists. We call $\gamma^*$ the optimal slack factor for the robust testing problem. Previous works have tried to characterize $\gamma^*$ for different distance measures $d(.,.)$. The $\ell_p$-norm of a function $f:\mathcal{X} \rightarrow \mathbb{R}$ is given by
\begin{align*}
    \|f\|_{p} := \left( \int_{x \in \mathcal{X}} |f(x)|^{p} dx \right)^{\frac{1}{p}}.
\end{align*}
The total variation (TV) distance two distributions $p_0$ and $p_1$ over $\mathcal{X}$ is given by
\begin{align*}
  TV(p_0,p_1) &= \frac{1}{2} \|p_0(x) - p_1(x)\|_1  \\
  &= \sup_{A} |p_0(A)-p_1(A)|.
\end{align*}
When the distance measure $d(.,.)$ is Total Variation (TV), it is known that $\gamma^* = 3$. The best test is based on the Scheff\'e estimator \cite[Theorem 6.1] {devroye2001combinatorial} which is given by 
\begin{equation*}
    T(X^m) = \begin{cases} 
          H_0  & \text{if} \hspace{5pt} |p_0(A) -  \mu(A)| \leq |p_1(A) -  \mu(A)| \\
          H_1 & \text{else.}
       \end{cases}
\end{equation*}
where $A = \{ x : p_0(x) > p_1(x) \}$ and $\mu(A) = \frac{1}{m} \sum_{i=1}^{m} 1[X_i \in A] $, that is, the fraction of observed samples which fall in the set $A$. The lower bound ($\gamma^* \ge 3$) was given in the work of Bousquets, Kane, and Moran \cite{bousquet2019optimal}. \blue{Mahalanabis and Stefankovic~\cite{mahalanabis2007density} have shown that $\gamma^*=2$ when the test is randomized and $TV(p,p_0)$ (resp. $TV(p,p_1)$) is replaced by $\mathbb{E}[TV(p,p_0)]$ (resp. $\mathbb{E}[TV(p,p_1)]$) in the hypothesis formulation.} The Hellinger distance between $p_0$ and $p_1$ is given by
\begin{align*}
    H^2(p_0,p_1) &= \frac{1}{2} {\|\sqrt{p_0}-\sqrt{p_1}\|}_2^2 \\
    &= 1 - \int_{x \in \mathcal{X}} \sqrt{p_0(x)p_1(x)}dx.
\end{align*}
It can be shown that $\frac{1}{2}TV^2(p_0,p_1) \le H^2(p_0,p_1) \le TV(p_0,p_1)$. Hellinger distance has some interesting properties: (i) It has a tensorization property (to decompose the distance between product distributions), and (ii) It is related to the notion of fidelity in the quantum information literature through the Bhattacharya distance, $B(p_0,p_1) = \int_{x \in \mathcal{X}} \sqrt{p_0(x)p_1(x)}dx$. Robust testing with respect to the Hellinger distance has been studied in the works of \cite{suresh2021robust,baraud2011estimator}.
Suresh \cite{suresh2021robust} constructed a test that worked for as long as $\gamma > (\frac{\sqrt{2}}{\sqrt{2}-1})^2$. Baraud's test \cite{baraud2011estimator} worked for $\gamma > \frac{\sqrt{2}+1}{\sqrt{2}-1}$. Thus, we know that $\gamma^* \le \frac{\sqrt{2}+1}{\sqrt{2}-1}$. However, a lower bound was not known in this case. In this work, we make some progress on this front.

\subsection{Our work}
We show that the optimal slack factor $\gamma^*$ is at least $\frac{\sqrt{2}}{\sqrt{2}-1}$ for both deterministic and randomized tests. Thus, there is a gap between the upper and lower bound. However, we show that our lower bound is tight under the constraint that the distributions in our model class have disjoint supports, that is, $p_0 \bot p_1$. We also show that the optimal slack factor $\gamma^*$ for the problem with respect to the symmetric chi-squared distance is at least $3$. Finally, we show that a simple modification of Baraud's test can be used to solve the composite hypothesis testing problem where each set is a ball of radius $r$ (in Hellinger distance) around $p_0$ or $p_1$.


\subsection{Notation and Convention}
In the remainder of the paper, $p_0$, $p_1$, and $p$ will be distributions over some domain $\mathcal{X}$. If $\mathcal{X}$ is discrete, then they will be probability mass functions (p.m.f.). If $\mathcal{X}$ is continuous ($\mathbb{R}^d$ in our case), then they will be densities with respect to the Lebesgue measure. \blue{For simplicity of exposition, we will treat $\mathcal{X}$ as discrete in Sections~\ref{sec:upper_bound} and \ref{sec:alt_test}. However, all arguments go through in the continuous case by replacing p.m.f. with densities and sums with integrals.} For any integer $N$, $[N]$ denotes the set $\{1, 2, \ldots, N\}$. Let $\text{unif}[a,b]$ denote the uniform distribution over the interval $[a,b]$.

\section{Problem Setup} \label{sec:problem_setup}
Let $\Delta(\mathcal{X})$ be the set of all probability distributions over $\mathcal{X}$. Let $\mathcal{P}=\{p_0,p_1\}$ be a model class where $p_0,p_1 \in \Delta(\mathcal{X})$. Let $p \in \Delta(\mathcal{X})$ be a target distribution. Upon receiving $m$ i.i.d. samples from $p$, consider the following hypothesis testing problem.
\begin{align*}
    H_0 &:\gamma H^2(p,p_0) \le H^2(p,p_1) \\
    H_1 &: H^2(p,p_0) \ge \gamma H^2(p,p_1).
\end{align*}
We are promised that the target distribution $p$ belongs to one of the hypotheses. Define the probability of error to be the maximum of type I and type II errors. We say $\mathcal{P}=\{p_0,p_1\}$ is $\gamma$-robust testable if there exists a (possibly randomized) test $T$ such that for every $\delta>0$ there is a finite sample complexity bound $m=m(\delta)$ such that for every target distribution $p$, if $T$ receives at least $m$ i.i.d. samples from $p$, it outputs $\argmin \limits_{q \in \mathcal{P}} d(q,p)$.  with probability at least $1-\delta$. We will fix $\delta$ to be equal to $\frac{1}{3}$ for the exposition. Let $\gamma^*$ denote the optimal slack factor, i.e. the smallest $\gamma$ for which every class $\mathcal{P}$ is $\gamma$-robust testable. To get an upper bound on $\gamma^*$, we need to construct a test which can $\gamma$-robustly test all classes $\mathcal{P}$. To obtain a lower bound on $\gamma^*$, we need to show that a particular class $\mathcal{P}$ is not $\gamma$-robustly testable.
\section{Upper Bound on $\gamma^*$} \label{sec:upper_bound}
What kinds of tests are $\gamma$-robust for some $\gamma > 1$? It is easy to see that the maximum likelihood (ML) test is not suitable for this problem in general. Consider the following example. Let $\text{unif[a,b]}$ denote the uniform distribution over the interval $[a,b]$. Let $p_0=\text{unif}[-1,1]$, $p_1=\text{unif}[\epsilon,1+\epsilon]$ and $p=\text{unif}[0,1]$. Observe that $H^2(p,p_0) = \frac{\sqrt{2}}{\sqrt{2}-1}$ and $H^2(p,p_1) = \epsilon$. Thus, $p_1$ can be arbitrarily closer to $p$ than $p_0$. After observing $m$ i.i.d. samples from $p$, the ML test will output $\arg \max \left\{ \prod_{i=1}^{m} p_0(X_i), \prod_{i=1}^{m} p_1(X_i) \right\}$. The test will output $p_1$ only if none of the $m$ samples falls in the interval $[0,\epsilon]$. The probability of error is equal to $1-(1-\epsilon)^m$ which tends to $1$ as $m \rightarrow \infty$. Thus, the ML test is not $\gamma$-robust for any $\gamma > 1$.

The tests suitable for the robust testing problem involve appropriate bounded proxies for the logarithm, i.e., some function $\psi$ such that $\psi(x) \in [-1,1]$ for all $x$ and \blue{$\psi(\frac{1}{x})=-\psi(x)$}. The tests in \cite{suresh2021robust,baraud2011estimator} fit in this framework. We briefly describe the test constructed by Baraud \cite{baraud2011estimator}, thereby showing an upper bound on $\gamma^*$.
\begin{theorem}[Baraud \cite{baraud2011estimator}]
    For $\gamma \ge \frac{\sqrt{2}+1}{\sqrt{2}-1}$, every class $\mathcal{P}=\{p_0,p_1\}$ is $\gamma$-robustly testable.
\end{theorem}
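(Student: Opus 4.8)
The plan is to exhibit Baraud's test, reduce its correctness to a single deterministic inequality relating $\mathbb E_p[t]$ to the two squared Hellinger distances, and then flag the proof of that inequality (with the sharp constant) as the crux.

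\textbf{The test.} Take the bounded surrogate $\psi(u)=\frac{u-1}{u+1}$ for the logarithm---so $|\psi|\le 1$ and $\psi(1/u)=-\psi(u)$---and define $t\colon\mathcal X\to[-1,1]$ by $t(x)=\psi\!\big(\sqrt{p_1(x)/p_2(x)}\big)=\frac{\sqrt{p_1(x)}-\sqrt{p_2(x)}}{\sqrt{p_1(x)}+\sqrt{p_2(x)}}$, with $t(x):=0$ when $p_1(x)=p_2(x)=0$; note $t$ depends only on $\mathcal P$. Given i.i.d.\ samples $X_1,\dots,X_m$ from $p$, compute $\widehat T_m=\frac1m\sum_{i=1}^m t(X_i)$ and output $H_0$ (i.e.\ $p_1$) if $\widehat T_m\ge 0$, and $H_1$ otherwise.

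\textbf{Reduction to a deterministic inequality.} The $t(X_i)$ are i.i.d.\ in $[-1,1]$, so Hoeffding's inequality gives $\Pr\big[|\widehat T_m-\mathbb E_p[t]|\ge s\big]\le 2e^{-ms^2/2}$. Hence it suffices to produce $\kappa=\kappa(p_1,p_2,\gamma)>0$, \emph{independent of $p$}, with $\mathbb E_p[t]\ge\kappa$ under $H_0$ and $\mathbb E_p[t]\le-\kappa$ under $H_1$: then $m\ge\tfrac{2}{\kappa^2}\log\tfrac2\delta$ samples keep the error below $\delta$, which is exactly $\gamma$-robust testability of $\mathcal P$. Put $\Sigma=H^2(p,p_1)+H^2(p,p_2)$ and $\Delta=H^2(p,p_2)-H^2(p,p_1)$. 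Using $H^2(p,q)=1-\sum_x\sqrt{p(x)q(x)}$ one has the identity $\sum_x\sqrt{p(x)}\big(\sqrt{p_1(x)}-\sqrt{p_2(x)}\big)=\Delta$, and $\mathbb E_p[t]=\sum_x\frac{p(x)}{\sqrt{p_1(x)}+\sqrt{p_2(x)}}\big(\sqrt{p_1(x)}-\sqrt{p_2(x)}\big)$ is the same sum with the weight $\sqrt{p(x)}$ replaced by $p(x)/(\sqrt{p_1(x)}+\sqrt{p_2(x)})$. Baraud's key lemma is that this replacement costs at most $\tfrac1{\sqrt2}\Sigma$, i.e.
\[
\mathbb E_p[t]\;\ge\;\Delta-\tfrac1{\sqrt2}\,\Sigma ,
\]
together with its mirror image (swap $p_1,p_2$, which negates $t$). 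Granting this: $H_0$ is equivalent to $\Delta\ge\tfrac{\gamma-1}{\gamma+1}\Sigma$, and $\tfrac{\gamma-1}{\gamma+1}$ increases in $\gamma$ with value $\tfrac1{\sqrt2}$ at $\gamma=\tfrac{\sqrt2+1}{\sqrt2-1}$, so for $\gamma\ge\tfrac{\sqrt2+1}{\sqrt2-1}$ we get $\mathbb E_p[t]\ge\big(\tfrac{\gamma-1}{\gamma+1}-\tfrac1{\sqrt2}\big)\Sigma\ge 0$. To turn this into a strictly positive $p$-free bound, combine the triangle inequality $H(p,p_1)+H(p,p_2)\ge H(p_1,p_2)$ with $H(p,p_1)\le\gamma^{-1/2}H(p,p_2)$ (which is $H_0$) to get $\Sigma\ge H^2(p,p_2)\ge\tfrac{\gamma}{(\sqrt\gamma+1)^2}H^2(p_1,p_2)$; this gives the required $\kappa$ for every $\gamma>\tfrac{\sqrt2+1}{\sqrt2-1}$, and the endpoint $\gamma=\tfrac{\sqrt2+1}{\sqrt2-1}$ is recovered by keeping track of the slack in the key lemma. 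The $H_1$ case is symmetric.

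\textbf{The main obstacle.} Everything above is routine; the content is in $\mathbb E_p[t]\ge\Delta-\tfrac1{\sqrt2}\Sigma$. Writing $a=\sqrt{p_1(x)}$, $b=\sqrt{p_2(x)}$, $c=\sqrt{p(x)}$ and using $\sum a^2=\sum b^2=\sum c^2=1$, it rearranges to an estimate of the form $\sum_x\frac{c(a-b)(a+b-c)}{a+b}\le\frac1{2\sqrt2}\sum_x\big((a-c)^2+(b-c)^2\big)$. This is \emph{not} valid term-by-term---it can fail pointwise exactly for mild perturbations, e.g.\ when $a/b$ is near but not equal to $1$---so one needs a genuinely global argument: decompose $a+b-c=(a-c)+b=(b-c)+a$, use $|a-b|\le a+b$ only where that is not too lossy, and apply Cauchy--Schwarz against the weights $c^2$ (so that $\sum_x c^2=1$ appears). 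The delicate point is calibrating constants so that exactly the factor $\sqrt2$---equivalently the threshold $\tfrac{\sqrt2+1}{\sqrt2-1}$---comes out rather than a worse constant; I expect matching this constant, as opposed to merely obtaining \emph{some} finite $\gamma$, to be the real difficulty, and would follow Baraud's computation here.
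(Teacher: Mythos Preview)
Your overall plan matches the paper's: exhibit a bounded statistic, prove its expectation has the correct sign under each hypothesis via a deterministic inequality, and conclude by Hoeffding. Two discrepancies are worth flagging.

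First, your statistic $t=\frac{\sqrt{p_1}-\sqrt{p_2}}{\sqrt{p_1}+\sqrt{p_2}}$ is \emph{not} the one the paper (following Baraud) analyses. The paper normalizes by $\sqrt{q}$ with $q=\tfrac12(p_1+p_2)$, i.e.\ by $\sqrt{p_1+p_2}$ rather than $\sqrt{p_1}+\sqrt{p_2}$, and adds the offset $H^2(p_2,q)-H^2(p_1,q)$. Your $t$ is (up to a constant) the Hellinger-midpoint variant that the paper only introduces later for the composite problem, and whose analysis there yields a condition of the form $H^2(p,p_1)<1-\cos(\theta/2)$ rather than your $\Delta\ge\tfrac1{\sqrt2}\Sigma$. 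So ``following Baraud's computation'' would not prove the inequality you wrote down; you would have to redo the argument for your statistic, and it is not immediate that the same constant emerges.

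Second, you flag the key inequality as the real difficulty and anticipate a global Cauchy--Schwarz argument with delicate calibration. For the paper's statistic this is misplaced: after the add-and-subtract step one obtains
\[
\mathbb E_p[T]\;=\;2\Delta\;+\;\sum_x\sqrt{\tfrac{p_1}{q}}\,(\sqrt p-\sqrt q)^2\;-\;\sum_x\sqrt{\tfrac{p_2}{q}}\,(\sqrt p-\sqrt q)^2,
\]
and the entire remaining proof is three elementary moves: (i) drop the nonnegative $\sqrt{p_1/q}$ sum; (ii) use the pointwise bound $\sqrt{p_2/q}\le\sqrt2$, valid because $p_2\le p_1+p_2=2q$; (iii) use convexity of $H^2$ in its second argument to get $H^2(p,q)\le\tfrac12\Sigma$. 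This already gives $\mathbb E_p[T]\ge(2-\sqrt2)H^2(p,p_2)-(2+\sqrt2)H^2(p,p_1)$, and the threshold $\tfrac{\sqrt2+1}{\sqrt2-1}$ drops out immediately. No Cauchy--Schwarz, no balancing of constants---the sharp $\sqrt2$ comes straight from $p_2\le 2q$.
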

\begin{proof}
Let $q:= \frac{p_0+p_1}{2}$. Let the test statistic be given by
    \begin{equation} \label{eqn:baraud_test}
        T(X^m) = \frac{1}{m} \sum_{i=1}^{m} \left[ \sqrt{\frac{p_0(X_i)}{q(X_i)}} - \sqrt{\frac{p_1(X_i)}{q(X_i)}}  \right] + H^2(p_1,q) - H^2(p_0,q).
    \end{equation}
    We will show that $\mathbb{E}[T(^m)] \ge 0$ under $H_0$ and $\mathbb{E}[T(^m)] \le 0$ under $H_1$. By linearity of expectation, it suffices to analyse for $m=1$. We will first analyze the expected value of $T(X)$ under $H_0$.
    \begin{align*}
        \mathbb{E}[(X)] &= \sum_{x}  p(x)\sqrt{\frac{p_0(x)}{q(x)}} -2\sqrt{p(x)p_0(x)} + 2\sqrt{p(x)p_0(x)} + \sqrt{p_0(x)q(x)} \\
        & \hspace{20pt} - p(x)\sqrt{\frac{p_1(x)}{q(x)}} + 2\sqrt{p(x)p_1(x)} - 2\sqrt{p(x)p_1(x)} - \sqrt{p_1(x)q(x)} \\
        &= 2H^2(p,p_1)-2H^2(p,p_0) + \sum_{x} \sqrt{\frac{p_0(x)}{q(x)}} (\sqrt{p(x)}-\sqrt{q(x)})^2 - \sum_{x} \sqrt{\frac{p_1(x)}{q(x)}} (\sqrt{p(x)}-\sqrt{q(x)})^2 \\
        &\overset{(a)}{\ge} 2H^2(p,p_1)-2H^2(p,p_0) - \sum_{x} \sqrt{\frac{p_1(x)}{q(x)}} (\sqrt{p(x)}-\sqrt{q(x)})^2 \\
        &\overset{(b)}{\ge} 2H^2(p,p_1)-2H^2(p,p_0) - \sqrt{2}\sum_{x} (\sqrt{p(x)}-\sqrt{q(x)})^2 \\
        &= 2H^2(p,p_1)-2H^2(p,p_0) - 2\sqrt{2} H^2(p,q) \\
        &\overset{(c)}{\ge} 2H^2(p,p_1)-2H^2(p,p_0) - 2\sqrt{2} \left( \frac{H^2(p,p_0)+H^2(p,p_1)}{2} \right) \\
        &= (2-\sqrt{2}) H^2(p,p_1) - (2 + \sqrt{2}) H^2(p,p_0).
    \end{align*}
    The first inequality $(a)$ is obtained by dropping a non-negative term, $(b)$ follows from the fact that $\frac{p_1(x)}{p_0(x)+p_1(x)} \le 1$, $(c)$ follows from the convexity of $H^2(.,.)$ (and thus $H^2\left(p,\frac{p_0+p_1}{2} \le \frac{H^2(p,p_0)+H^2(p,p_1)}{2} \right)$).
    Thus, $\mathbb{E}[T(x)] \ge 0$ if $H^2(p,p_1) \ge \frac{\sqrt{2}+1}{\sqrt{2}-1} H^2(p,p_0)$. Likewise, we can show that $\mathbb{E}[T(x)] \le 0$ if $H^2(p,p_0) \ge \frac{\sqrt{2}+1}{\sqrt{2}-1} H^2(p,p_1)$. \blue{Since the test statistic is bounded, we can use Hoeffding's inequality to bound the error.} This completes the proof. 
\end{proof}
\section{Lower Bound on $\gamma^*$} \label{sec:lower_bound}
We now show that $\gamma^* \ge \frac{\sqrt{2}}{\sqrt{2}-1}$. The construction given in \cite[Section 4]{bousquet2019optimal} does not work in our case. We have to make certain adaptations. However, we follow their general technique which constructs a contradiction using Le Cam style argument.
\begin{theorem} \label{thrm:robust_lb}
    For every $\gamma < \frac{\sqrt{2}}{\sqrt{2}-1}$, there is a class $\mathcal{P}=\{p_0,p_1\}$ which is not $\gamma$-robustly testable.
\end{theorem}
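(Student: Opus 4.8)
The plan is to run a \emph{mixture} (composite) version of Le Cam's two-point method, following the strategy of \cite{bousquet2019optimal} but with a construction adapted to the Hellinger geometry. First I would isolate the reduction: to show that a class $\mathcal P=\{p_1,p_2\}$ is not $\gamma$-robustly testable it suffices to produce, for every $m$, a prior $\pi_0$ supported on $H_0=\{p:\gamma H^2(p,p_1)\le H^2(p,p_2)\}$ and a prior $\pi_1$ supported on $H_1=\{p:H^2(p,p_1)\ge\gamma H^2(p,p_2)\}$ such that the mixtures $\bar P_0:=\mathbb E_{p\sim\pi_0}\big[p^{\otimes m}\big]$ and $\bar P_1:=\mathbb E_{p\sim\pi_1}\big[p^{\otimes m}\big]$ satisfy $TV(\bar P_0,\bar P_1)<1/3$. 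Indeed, any test $T$ that is correct with probability $\ge 1-\delta$ on every member of $H_0\cup H_1$ has $\Pr_{\bar P_0}[T\text{ outputs }H_0]\ge 1-\delta$ and $\Pr_{\bar P_1}[T\text{ outputs }H_0]\le\delta$, so $1-2\delta\le TV(\bar P_0,\bar P_1)<1/3$, i.e.\ $\delta>1/3$; since this holds for every $m$, no such $T$ exists. A \emph{single}-pair version is hopeless here: writing $u=\sqrt p$ etc., a Cauchy--Schwarz computation shows that $p\in H_0$ and $q\in H_1$ are always at Hellinger distance bounded below by a positive constant depending only on $\gamma$ and $H^2(p_1,p_2)$, so $p^{\otimes m}$ and $q^{\otimes m}$ eventually become perfectly distinguishable; only the mixtures can be brought close.

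For the class I would take a disjoint-support pair on a continuous domain, e.g.\ $p_1=\mathrm{Unif}[0,1]$ and $p_2=\mathrm{Unif}[2,3]$ (the case $p_1\perp p_2$, which the introduction flags as the extremal one). The priors exploit the two antagonistic forces already visible in Baraud's analysis: a distribution lands in $H_0$ only if it has large Bhattacharya overlap with $p_1$, which by Cauchy--Schwarz means its mass on $\mathrm{supp}(p_1)$ must be spread \emph{uniformly} like $p_1$; it stays far from $p_2$ precisely by \emph{concentrating} whatever mass it places on $\mathrm{supp}(p_2)$. So fix a small constant $\eta>0$ (depending only on $\gamma$). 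Under $\pi_0$: sample a ``fine'' random subset $U\subseteq[2,3]$ of Lebesgue measure $\eta$ and output the distribution with density $\tfrac12$ on $[0,1]$ and density $\tfrac1{2\eta}$ on $U$; under $\pi_1$ do the mirror construction with a random $U\subseteq[0,1]$.

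The constant $\tfrac{\sqrt2}{\sqrt2-1}$ enters in the membership check. If $p\sim\pi_0$ then its $[0,1]$-part is exactly $\tfrac12 p_1$, so $B(p,p_1)=\tfrac1{\sqrt2}$ and $H^2(p,p_1)=1-\tfrac1{\sqrt2}$, while its $[2,3]$-part is uniform on a measure-$\eta$ set, so $B(p,p_2)=\sqrt{\eta/2}$ and $H^2(p,p_2)=1-\sqrt{\eta/2}$. Hence $p\in H_0$ iff $\gamma\big(1-\tfrac1{\sqrt2}\big)\le 1-\sqrt{\eta/2}$, and for every $\gamma<\tfrac1{1-1/\sqrt2}=\tfrac{\sqrt2}{\sqrt2-1}$ this is satisfied once $\eta$ is chosen small enough (a constant, independent of $m$); the mirror computation places every $q$ in the support of $\pi_1$ into $H_1$.

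The remaining and main task is to bound $TV(\bar P_0,\bar P_1)$, and this is where the real work lies. I would compare both mixtures to $R^{\otimes m}$ with $R:=\tfrac12 p_1+\tfrac12 p_2$. Under $\bar P_0$ each of the $m$ samples is, independently, a uniform point of $[0,1]$ with probability $\tfrac12$ (exactly as under $R^{\otimes m}$) and otherwise a uniform point of the \emph{one common} random set $U$; thus the only discrepancy with $R^{\otimes m}$ sits in the sub-block of samples falling in $[2,3]$, and distinguishing them reduces to telling i.i.d.\ uniform samples on $[2,3]$ apart from i.i.d.\ uniform samples on a random measure-$\eta$ subset. Realising $U$ by splitting $[2,3]$ into $N$ equal cells and keeping each independently with probability $\eta$, a birthday/collision (second-moment) estimate bounds this TV by $O\big(m^2/(\eta N)\big)$, so choosing the resolution $N=N_m$ large enough for each fixed $m$ (harmless on a continuous domain) makes $TV(\bar P_0,R^{\otimes m})$, and by symmetry $TV(\bar P_1,R^{\otimes m})$, below $1/6$, hence $TV(\bar P_0,\bar P_1)<1/3$. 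The points that need care are the clean measure-theoretic implementation of the ``random fine subset'' (the cell discretization above does it), the collision bound in the presence of the extra $\mathrm{Binomial}(m,\tfrac12)$ randomness in how many samples land in $[2,3]$, and verifying that the single constant $\eta$ can be taken small enough for $H_0/H_1$-membership while remaining fixed as $m\to\infty$; the main obstacle is precisely this indistinguishability estimate, since it is what lets the mixtures be close even though no individual pair $(p\in H_0,\,q\in H_1)$ is.
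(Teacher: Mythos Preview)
Your proposal is correct and follows essentially the same approach as the paper: the paper also takes a disjoint-support pair (setting $b=1$ makes $p_1,p_2$ supported on the two halves of $[0,1]$), builds each $p\sim\pi_0$ by putting half its mass as $\tfrac12 p_1$ and the other half uniformly on a tiny random union of cells inside $\mathrm{supp}(p_2)$ (your $\eta$ is their $1/a_1$), and obtains the ratio $\tfrac{\sqrt2}{\sqrt2-1}$ by the same Bhattacharya computation you wrote. The indistinguishability step is the same birthday argument, packaged as conditioning on the no-bin-collision event (via \cite[Lemma~24, Claim~26]{bousquet2019optimal}) rather than triangulating through $R^{\otimes m}$, with the cell count $N_m$ taken of order $m^2$.
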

We first outline the sketch of the proof. \blue{We first fix $p_0$ and $p_1$. Let $m$ be the number of i.i.d. samples drawn from the true distribution $p$ . For any fixed $m$, we have the following.}
\begin{enumerate}
    \item \blue{We construct a family of distributions $\mathcal{D}_0$ such that for every $p \in \mathcal{D}_0$ and any arbitrarily small $\epsilon$, we have
    \begin{align*}
    \frac{\sqrt{2}}{\sqrt{2}-1} - \epsilon < \frac{H^2(p,p_1)}{H^2(p,p_0)} < \frac{\sqrt{2}}{\sqrt{2}-1}.    
    \end{align*}}
    \blue{We construct another family of distributions $\mathcal{D}_1$ such that for every $p \in \mathcal{D}_1$ and any arbitrarily small $\epsilon$, we have
    \begin{align*}
     \frac{\sqrt{2}}{\sqrt{2}-1} - \epsilon < \frac{H^2(p,p_0)}{H^2(p,p_1)} < \frac{\sqrt{2}}{\sqrt{2}-1}.   
    \end{align*}}
    \item Let $\mathcal{D}_0^m$ denote the following product distribution: pick $p$ uniformly at random from the family $\mathcal{D}_0$ and draw m i.i.d. samples from $p$. Likewise, $\mathcal{D}_1^m$ denotes the following product distribution: pick $p$ uniformly at random from the family $\mathcal{D}_1$ and draw m i.i.d. samples from $p$. We show that $TV(\mathcal{D}_0^m,\mathcal{D}_1^m) \le \frac{1}{3}$.
    \item We use Le Cam's argument to show that the probability of error in distinguishing between $\mathcal{D}_0^m$ and $\mathcal{D}_1^m$ is at least $\frac{1}{3}$.  
    \item The above three points imply that if we had a $\gamma$-robust test with $\gamma  < \frac{\sqrt{2}}{\sqrt{2}-1}$ then we could use it to distinguish between $\mathcal{D}_0^m$ and $\mathcal{D}_1^m$ with probability of error at most $\frac{1}{3}$ which contradicts point (3). Hence, such a test cannot exist.
\end{enumerate}

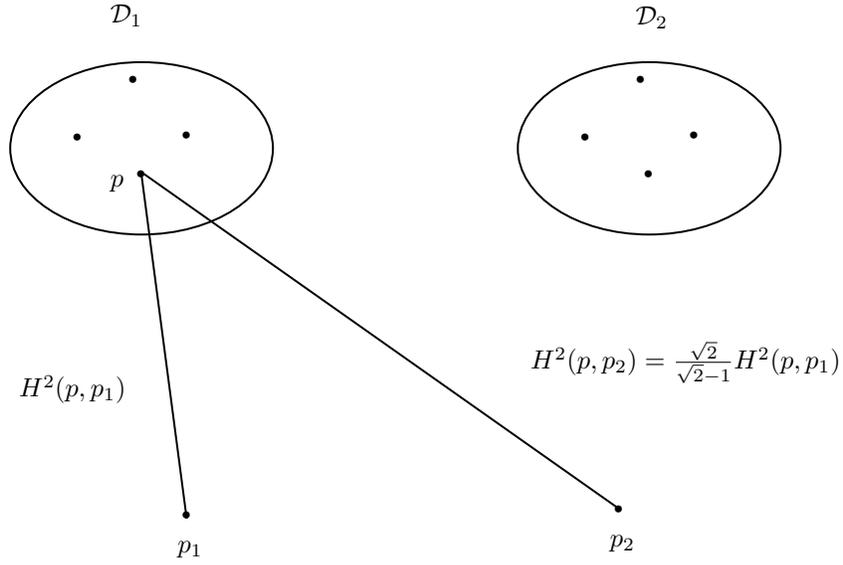
\begin{figure}
    \centering

    \tikzset{every picture/.style={line width=0.75pt}} 

\begin{tikzpicture}[x=0.75pt,y=0.75pt,yscale=-1,xscale=1]

\draw   (115,131.5) .. controls (115,107.48) and (144.66,88) .. (181.25,88) .. controls (217.84,88) and (247.5,107.48) .. (247.5,131.5) .. controls (247.5,155.52) and (217.84,175) .. (181.25,175) .. controls (144.66,175) and (115,155.52) .. (115,131.5) -- cycle ;
\draw   (371,131.5) .. controls (371,107.48) and (400.66,88) .. (437.25,88) .. controls (473.84,88) and (503.5,107.48) .. (503.5,131.5) .. controls (503.5,155.52) and (473.84,175) .. (437.25,175) .. controls (400.66,175) and (371,155.52) .. (371,131.5) -- cycle ;
\draw    (181,143.4) -- (204,318.4) ;
\draw    (181,143.4) -- (422,313.4) ;

\draw (198,328.4) node [anchor=north west][inner sep=0.75pt]    {$p_{0}$};
\draw (416,325.4) node [anchor=north west][inner sep=0.75pt]    {$p_{1}$};
\draw (200,313.4) node [anchor=north west][inner sep=0.75pt]   {\textbullet};
\draw (418,310.4) node [anchor=north west][inner sep=0.75pt]    {\textbullet};
\draw (145,122.4) node [anchor=north west][inner sep=0.75pt]    {\textbullet};
\draw (173,93.4) node [anchor=north west][inner sep=0.75pt]    {\textbullet};
\draw (177,141.4) node [anchor=north west][inner sep=0.75pt]    {\textbullet};
\draw (200,121.4) node [anchor=north west][inner sep=0.75pt]    {\textbullet};
\draw (401,122.4) node [anchor=north west][inner sep=0.75pt]    {\textbullet};
\draw (429,93.4) node [anchor=north west][inner sep=0.75pt]    {\textbullet};
\draw (433,141.4) node [anchor=north west][inner sep=0.75pt]    {\textbullet};
\draw (456,121.4) node [anchor=north west][inner sep=0.75pt]    {\textbullet};
\draw (164,57.4) node [anchor=north west][inner sep=0.75pt]    {$\mathcal{D}_{0}$};
\draw (429,58.4) node [anchor=north west][inner sep=0.75pt]    {$\mathcal{D}_{1}$};
\draw (118,244.4) node [anchor=north west][inner sep=0.75pt]    {$H^{2}( p,p_{0})$};
\draw (164,143.4) node [anchor=north west][inner sep=0.75pt]    {$p$};
\draw (376,227.4) node [anchor=north west][inner sep=0.75pt]    {$H^{2}( p,p_{1}) =\frac{\sqrt{2}}{\sqrt{2} -1} H^{2}( p,p_{0})$};

\end{tikzpicture}
    
    \caption{$\mathcal{D}_0$ is a family of distributions such that all its members are $\frac{\sqrt{2}}{\sqrt{2}-1}$ times farther to $p_1$ than to $p_0$ in Hellinger distance. Likewise, all the members of $\mathcal{D}_1$ are $\frac{\sqrt{2}}{\sqrt{2}-1}$ times farther to $p_0$ than to $p_1$ in Hellinger distance.}
    \label{fig:proof_sketch_lb}
\end{figure}

\begin{proof}
    We first construct the families $\mathcal{D}_0$ and $\mathcal{D}_1$ described in the proof sketch. Let $0 < b \le 1$. We will set the value of $b$ later. Define distribution $p_0$ as follows.
\begin{equation*}
    p_0(x) = \begin{cases} 
          1-b & x\leq 0.5 \\
          1+b & 0.5 < x \leq 1. 
       \end{cases}
\end{equation*}
Define distribution $p_1$ as follows.
\begin{equation*}
    p_1(x) = \begin{cases} 
          1+b & x\leq 0.5 \\
          1-b & 0.5 < x \leq 1 
       \end{cases}
\end{equation*}
Divide $[0,1]$ interval into $2N_m$ bins of equal size. Let $I_k=\left[\frac{k-1}{2N_m},\frac{k}{2N_m}\right)$ be the $k^{\text{th}}$ bin. Let $a_1 \ge b$ and $b \le a_2 \le 1+b$. We select a subset $R_1 \subseteq [N_m]$ of size $\left({\frac{b}{a_1}}\right)N_m$. We also select a subset $R_2 \subseteq [N_m]$ of size $\left(\frac{b}{a_2}\right)N_m$. Consider the distribution $p^{R_1,R_2}$ obtained by perturbing $p_0$.
\begin{equation*}
    p^{R_1,R_2}(x) = \begin{cases} 
          1-b & x \in I_j,j \notin R_1,j \le N_m \\
          1-b+a_1 & x \in I_j,j \in R_1,j \le N_m \\
          1+b & x \in I_j,j-N_m \notin R_2,j > N_m \\
          1+b-a_2 & x \in I_j,j-N_m \in R_2,j > N_m. 
       \end{cases}
\end{equation*}
Note that this is indeed a distribution since the probability mass we add is $a_1 \left( \frac{1}{2N_m} \right) \left( \frac{b}{a_1} N_m \right) = 0.5b$ and the probability mass we remove is $a_2 \left( \frac{1}{2N_m} \right) \left( \frac{b}{a_2} N_m \right) = 0.5b$. \\
Let $\mathcal{D}_0=\left\{ p^{R_1,R_2}: R_1,R_2 \subseteq [N_m], |R_1|=\left({\frac{b}{a_1}}\right)N_m ,|R_2|=\left({\frac{b}{a_2}}\right)N_m \right\}$ be the family of distributions (parameterized by the sets $R_1,R_2$) obtained by perturbing $p_0$. Now, consider the distribution
$\bar{p}^{R_1,R_2}$ obtained by perturbing $p_1$.
\begin{equation*}
    \bar{p}^{R_1,R_2}(x) = \begin{cases} 
          1+b & x \in I_j,j \notin R_1,j \le N_m \\
          1+b-a_2 & x \in I_j,j \in R_1,j \le N_m \\
          1-b & x \in I_j,j-N_m \notin R_2,j > N_m \\
          1-b+a_1 & x \in I_j,j-N_m \in R_2,j > N_m. 
       \end{cases}
\end{equation*}
Let $\mathcal{D}_1=\left\{ \bar{p}^{R_1,R_2}: R_1,R_2 \subseteq [N_m], |R_1|=\left({\frac{b}{a_1}}\right)N_m ,|R_2|=\left({\frac{b}{a_2}}\right)N_m \right\}$ be the family of distributions (parameterized by the sets $R_1,R_2$) obtained by perturbing $p_1$. \\

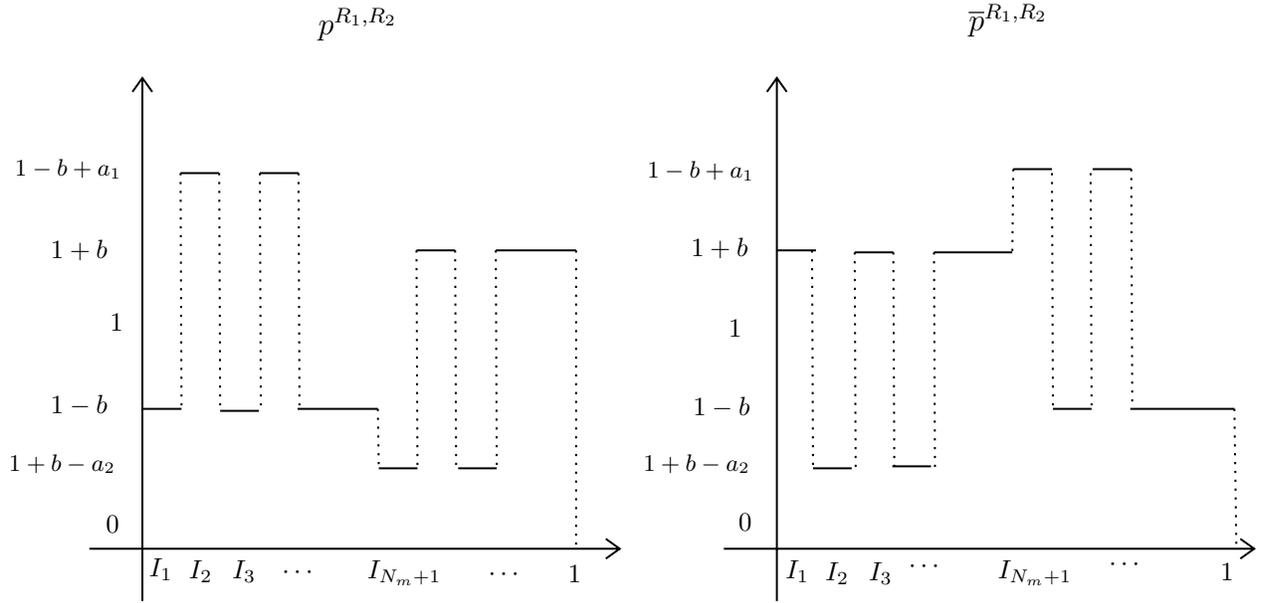
\begin{figure}
    \centering

\tikzset{every picture/.style={line width=0.75pt}} 

\begin{tikzpicture}[x=0.75pt,y=0.75pt,yscale=-1,xscale=1]

\draw  (45,301.6) -- (312.5,301.6)(71.75,64) -- (71.75,328) (305.5,296.6) -- (312.5,301.6) -- (305.5,306.6) (66.75,71) -- (71.75,64) -- (76.75,71)  ;
\draw  (365,301.6) -- (632.5,301.6)(391.75,64) -- (391.75,328) (625.5,296.6) -- (632.5,301.6) -- (625.5,306.6) (386.75,71) -- (391.75,64) -- (396.75,71)  ;
\draw    (72,231) -- (91.5,231) ;
\draw    (91,112) -- (110.5,112) ;
\draw    (111,232) -- (130.5,232) ;
\draw    (131,112) -- (150.5,112) ;
\draw    (150,231) -- (190.5,231) ;
\draw    (191,261) -- (210.5,261) ;
\draw    (210,151) -- (229.5,151) ;
\draw    (231,261) -- (250.5,261) ;
\draw    (250,151) -- (290.5,151) ;
\draw  [dash pattern={on 0.84pt off 2.51pt}]  (91,112) -- (91.5,231) ;
\draw  [dash pattern={on 0.84pt off 2.51pt}]  (110.5,112) -- (111,231) ;
\draw  [dash pattern={on 0.84pt off 2.51pt}]  (131,112) -- (131.5,231) ;
\draw  [dash pattern={on 0.84pt off 2.51pt}]  (150.5,112) -- (151,231) ;
\draw  [dash pattern={on 0.84pt off 2.51pt}]  (250,151) -- (250.5,261) ;
\draw  [dash pattern={on 0.84pt off 2.51pt}]  (229.5,151) -- (230,261) ;
\draw  [dash pattern={on 0.84pt off 2.51pt}]  (210,151) -- (210.5,261) ;
\draw  [dash pattern={on 0.84pt off 2.51pt}]  (190.5,231) -- (191,261) ;
\draw    (392,151) -- (411.5,151) ;
\draw    (410,261) -- (429.5,261) ;
\draw    (431,152) -- (450.5,152) ;
\draw    (450,260) -- (469.5,260) ;
\draw    (471,152) -- (510.5,152) ;
\draw    (511,110) -- (530.5,110) ;
\draw    (531,231) -- (550.5,231) ;
\draw    (551,110) -- (570.5,110) ;
\draw    (570,231) -- (622.5,231) ;
\draw  [dash pattern={on 0.84pt off 2.51pt}]  (530.5,112) -- (531,231) ;
\draw  [dash pattern={on 0.84pt off 2.51pt}]  (550,112) -- (550.5,231) ;
\draw  [dash pattern={on 0.84pt off 2.51pt}]  (570.5,110) -- (571,229) ;
\draw  [dash pattern={on 0.84pt off 2.51pt}]  (409.5,151) -- (410,261) ;
\draw  [dash pattern={on 0.84pt off 2.51pt}]  (431,152) -- (431.5,262) ;
\draw  [dash pattern={on 0.84pt off 2.51pt}]  (450.5,152) -- (451,262) ;
\draw  [dash pattern={on 0.84pt off 2.51pt}]  (471,152) -- (471.5,262) ;
\draw  [dash pattern={on 0.84pt off 2.51pt}]  (511,110) -- (510.5,152) ;
\draw  [dash pattern={on 0.84pt off 2.51pt}]  (290.5,151) -- (290.5,301) ;
\draw  [dash pattern={on 0.84pt off 2.51pt}]  (622.5,231) -- (623.5,301) ;

\draw (24,222.4) node [anchor=north west][inner sep=0.75pt]    {$1-b$};
\draw (24,144.4) node [anchor=north west][inner sep=0.75pt]    {$1+b$};
\draw (54,181.4) node [anchor=north west][inner sep=0.75pt]    {$1$};
\draw (6,103.4) node [anchor=north west][inner sep=0.75pt]  [font=\small]  {$1-b+a_{1}$};
\draw (3,252.4) node [anchor=north west][inner sep=0.75pt]  [font=\small]  {$1+b-a_{2}$};
\draw (159,27.4) node [anchor=north west][inner sep=0.75pt]  [font=\large]  {$p^{R_{1} ,R_{2}}$};
\draw (487,25.4) node [anchor=north west][inner sep=0.75pt]  [font=\large]  {$\overline{p}^{R_{1} ,R_{2}}$};
\draw (325,104.4) node [anchor=north west][inner sep=0.75pt]  [font=\small]  {$1-b+a_{1}$};
\draw (323,252.4) node [anchor=north west][inner sep=0.75pt]  [font=\small]  {$1+b-a_{2}$};
\draw (347,143.4) node [anchor=north west][inner sep=0.75pt]    {$1+b$};
\draw (348,223.4) node [anchor=north west][inner sep=0.75pt]    {$1-b$};
\draw (366,184.4) node [anchor=north west][inner sep=0.75pt]    {$1$};
\draw (285,308.4) node [anchor=north west][inner sep=0.75pt]    {$1$};
\draw (614,307.4) node [anchor=north west][inner sep=0.75pt]    {$1$};
\draw (52,283.4) node [anchor=north west][inner sep=0.75pt]    {$0$};
\draw (371,282.4) node [anchor=north west][inner sep=0.75pt]    {$0$};
\draw (73.75,305) node [anchor=north west][inner sep=0.75pt]    {$I_{1}$};
\draw (93.75,306) node [anchor=north west][inner sep=0.75pt]    {$I_{2}$};
\draw (115.75,306) node [anchor=north west][inner sep=0.75pt]    {$I_{3}$};
\draw (394.75,306) node [anchor=north west][inner sep=0.75pt]    {$I_{1}$};
\draw (414.75,307) node [anchor=north west][inner sep=0.75pt]    {$I_{2}$};
\draw (436.75,307) node [anchor=north west][inner sep=0.75pt]    {$I_{3}$};
\draw (183.75,306) node [anchor=north west][inner sep=0.75pt]    {$I_{N_{m} +1}$};
\draw (501.75,306) node [anchor=north west][inner sep=0.75pt]    {$I_{N_{m} +1}$};
\draw (140.75,309) node [anchor=north west][inner sep=0.75pt]    {$\cdots $};
\draw (456.75,306) node [anchor=north west][inner sep=0.75pt]    {$\cdots $};
\draw (244.75,310) node [anchor=north west][inner sep=0.75pt]    {$\cdots $};
\draw (557.75,305) node [anchor=north west][inner sep=0.75pt]    {$\cdots $};

\end{tikzpicture}

    \caption{An example of $p^{R_1,R_2}$ (perturbed around $p_0$) and $\bar{p}^{R_1,R_2}$ (perturbed around $p_1$) when $R_1=\{2,4\}$ and $R_2=\{1,3\}$.}
    \label{fig:perturb}
\end{figure}

We now set $b=1, a_2=1$ and let $a_1 \rightarrow \infty$. For any $p^{R_1,R_2} \in \mathcal{D}_0$, its Hellinger distance from $p_0$ is given by
\begin{align*}
    H^2(p,p_0) &= \frac{1}{2}\left[ (\sqrt{a_1}-0)^2 \left( \frac{1}{2N_m} \right)\left( \frac{1}{a_1}N_m \right) + (\sqrt{2}-1)^2 \left( \frac{1}{2N_m} \right)N_m \right]  \\
    &= \frac{1}{4} \left[ a_1\frac{1}{a_1} + (\sqrt{2}-1)^2 \right] \\
    &= \frac{1}{4} \left[ (\sqrt{2}-1)^2 + 1 \right].
\end{align*}
Also, its Hellinger distance from $p_1$ is given by
\begin{align*}
    H^2(p,p_1) &= \frac{1}{2} \left[ (\sqrt{2}-0)^2 \frac{1}{2N_m} \left(1-\frac{1}{a_1}\right)N_m + (\sqrt{a_1}-\sqrt{2})^2 \frac{1}{2N_m} \frac{1}{a_1} N_m + (\sqrt{1}-0)^2 \frac{1}{2N_m} N_m \right] \\
    &= \frac{1}{4} \left[ 2\left(1-\frac{1}{a_1}\right) + \left(1-\sqrt{\frac{2}{a_1}} \right)^2 + 1 \right]
\end{align*}
We let $a_1 \rightarrow \infty$. Thus, we have
\begin{align*}
    \lim_{a_1 \rightarrow \infty} \frac{H^2(p,p_1)}{H^2(p,p_0)} &= \frac{2+1+1}{4-2\sqrt{2}} \\
    &= \frac{\sqrt{2}}{\sqrt{2}-1}.
\end{align*}
Likewise for any $\bar{p}^{R_1,R_2} \in \mathcal{D}_1$, we have
\begin{align*}
    \lim_{a_1 \rightarrow \infty} \frac{H^2(p,p_0)}{H^2(p,p_1)} &= \frac{\sqrt{2}}{\sqrt{2}-1}.
\end{align*}
Let $\mathcal{D}_0^m$ denote the following product distribution: pick $p$ uniformly at random from the family $\mathcal{D}_0$ and draw m i.i.d. samples from $p$. Likewise, $\mathcal{D}_1^m$ denotes the following product distribution: pick $p$ uniformly at random from the family $\mathcal{D}_1$ and draw m i.i.d. samples from $p$. We now show that $TV(\mathcal{D}_0^m,\mathcal{D}_0^m)$ is small for an appropriate choice of $N_m$. We use \cite[Lemma 24]{bousquet2019optimal}, which states that two distributions are close (in TV) if there exists an event $E$ such that both distributions put a large mass on $E$ and conditioned on this event the two distributions are close (in TV). We reproduce the proof for completeness.
\begin{lemma}[{\cite[Lemma 24]{bousquet2019optimal}}]
    Let $u$ and $v$ be two distributions on a domain $\mathcal{X}$. Let $E \subseteq \mathcal{X}$ be a event. Let $u_{|E}$ and $v_{|E}$ be the respective distributions conditioned on $E$, i.e., $u_{|E}(x) := u(x|E)$, $v_{|E}(x) := v(x|E)$. Then, we have 
    \begin{align*}
        TV(u,v) \le TV(u_{|E},v_{|E}) + 2u(E^c) + 2v(E^c).
    \end{align*}
\end{lemma}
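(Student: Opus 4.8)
The plan is to work directly from the identity $TV(u,v)=\tfrac12\sum_{x\in\mathcal{X}}|u(x)-v(x)|$ and to split the sum according to whether $x\in E$ or $x\in E^c$. On $E^c$ the two distributions together carry total mass $u(E^c)+v(E^c)$, so by the triangle inequality $\sum_{x\in E^c}|u(x)-v(x)|\le\sum_{x\in E^c}\bigl(u(x)+v(x)\bigr)=u(E^c)+v(E^c)$; this is the easy half. (If $u(E)=0$ or $v(E)=0$ the conditional distributions are not defined, but then $u(E^c)+v(E^c)\ge 1$ and the claimed inequality is trivial, so we may assume $u(E),v(E)>0$.)

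The substantive half is to bound $\sum_{x\in E}|u(x)-v(x)|$ in terms of the conditional distributions. For $x\in E$ we have $u(x)=u(E)\,u_{|E}(x)$ and $v(x)=v(E)\,v_{|E}(x)$, so I would insert a cross term:
\begin{align*}
u(x)-v(x)=u(E)\bigl(u_{|E}(x)-v_{|E}(x)\bigr)+\bigl(u(E)-v(E)\bigr)v_{|E}(x).
\end{align*}
Summing $|u(x)-v(x)|$ over $x\in E$ and using the triangle inequality, $u(E)\le 1$, $\sum_{x\in E}|u_{|E}(x)-v_{|E}(x)|=2\,TV(u_{|E},v_{|E})$, and $\sum_{x\in E}v_{|E}(x)=1$, we get $\sum_{x\in E}|u(x)-v(x)|\le 2\,TV(u_{|E},v_{|E})+|u(E)-v(E)|$. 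The one point to handle carefully is $|u(E)-v(E)|$: rather than bounding the two terms separately, use the complement identity $|u(E)-v(E)|=|u(E^c)-v(E^c)|\le u(E^c)+v(E^c)$.

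Combining the two halves and dividing by $2$ gives the stronger bound $TV(u,v)\le TV(u_{|E},v_{|E})+u(E^c)+v(E^c)$, and weakening the constant from $1$ to $2$ yields the stated inequality. There is essentially no real obstacle here — the lemma is elementary — and the only step that requires a moment's thought is the estimate on $|u(E)-v(E)|$ via the complement.
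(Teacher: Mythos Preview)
Your proof is correct and follows essentially the same route as the paper's: split into $E$ and $E^{c}$, on $E$ write $u=u(E)\,u_{|E}$, $v=v(E)\,v_{|E}$ and insert the cross term to isolate $TV(u_{|E},v_{|E})$ and $|u(E)-v(E)|$, then pass to the complement to bound $|u(E)-v(E)|\le u(E^{c})+v(E^{c})$. The only cosmetic difference is that you work with the $\ell^{1}$ formulation of TV while the paper uses the $\sup_{A}$ formulation; your version is a touch tighter (yielding the constant $1$ rather than $2$) and you also handle the degenerate case $u(E)=0$ or $v(E)=0$, which the paper omits.
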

\begin{proof}
    \begin{align*}
        TV(u,v) &= \sup_{A} |u(A)-v(A)| \\
        &= \sup_{A} | u(A \cap E) + u(A \cap E^c) - v(A \cap E) - v(A \cap E^c) | \\
        &\overset{(a)}{\le} \sup_{A} |u(A \cap E) - v(A \cap E)| + \sup_{A} |u(A \cap E^c) - v(A \cap E^c)| \\
        &\overset{(b)}{\le} \sup_{A} |u(E)u(A|E) - v(E)v(A|E) + u(E)v(A|E) - u(E)v(A|E)| + u(E^c) + v(E^c) \\
        &= \sup_{A} |u(E)(u(A|E) - v(A|E))  + v(A|E)(u(E) - v(E))| + u(E^c) + v(E^c) \\
        &\overset{(c)}{\le} \sup_{A} |u(E)(u(A|E) - v(A|E))| + \sup_{A}|v(A|E)(u(E) - v(E))| + u(E^c) + v(E^c) \\
        &\le u(E) \sup_{A} |(u(A|E) - v(A|E))| + |u(E) - v(E)| + u(E^c) + v(E^c) \\
        &\overset{(d)}{\le} TV(u_{|E},v_{|E}) +  |u(E) - v(E)| + u(E^c) + v(E^c) \\
        &= TV(u_{|E},v_{|E}) +  |u(E^c) - v(E^c)| + u(E^c) + v(E^c) \\
        &\le TV(u_{|E},v_{|E}) + 2u(E^c) + 2v(E^c).
    \end{align*}
    $(a)$ and $(c)$ are by triangle inequality, $(b)$ follows from the fact that $u(A \cap E^c) \le u(E^c)$ and $-v(A \cap E^c) \le v(E^c)$, $(d)$ follows from the definition of TV distance.
\end{proof}
We use the above lemma with $u=\mathcal{D}_0^m$, $v=\mathcal{D}_1^m$ and $E$ the event where every interval $I_j$ contains at most one sample. Observe that the event $E$ is invariant under any permutation of $I_j$'s. Furthermore, if $p$ is picked uniformly at random from the family $\mathcal{D}_0$ or $\mathcal{D}_1$ then $\mathbb{E}[p] = \text{unif}[0,1]$, i.e. the uniform distribution over $[0,1]$. Using \cite[Claim 26]{bousquet2019optimal}, we have ${\mathcal{D}_0^m}_{|E}={\mathcal{D}_1^m}_{|E}$. Thus, 
\begin{align*}
    TV(\mathcal{D}_0^m,\mathcal{D}_0^m) \le 2\mathcal{D}_0^m(E^c) + 2\mathcal{D}_1^m(E^c).
\end{align*}
Observe that
\begin{align*}
    \mathcal{D}_0^m(E) &\ge \left( 1-\frac{1}{2N_m} \right) \left( 1-\frac{2}{2N_m} \right) \cdots \left( 1-\frac{m-1}{2N_m} \right) \\
    &\ge \left( 1-\frac{m-1}{2N_m} \right)^{m-1} \\
    &\approx e^{\frac{-(m-1)^2}{2N_m}}
\end{align*}
If we pick $N_m = C(m-1)^2$ then $\mathcal{D}_0^m(E) \ge \frac{11}{12}$ for some appropriate constant $C$. Likewise, we have $\mathcal{D}_1^m(E) \ge \frac{11}{12}$. Thus, we have $TV(\mathcal{D}_0^m,\mathcal{D}_1^m) \le \frac{1}{3}$.
We now use Le Cam's Lemma \cite[Lemma 1]{yu1997assouad}.
\begin{lemma}[{\cite[Lemma 1]{yu1997assouad}}]
    Let $u$ and $v$ be two distributions on the domain $\mathcal{X}$. Consider a test (possibly randomized) that uses $m$ i.i.d. samples from $p \in \{u,v\}$ and decides whether $p=u$ or $p=v$. Then the maximum probability of error of this test is lower bounded by $\frac{1}{2}( 1- TV(u,v))$.
\end{lemma}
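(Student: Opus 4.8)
The plan is the classical two-point (Le~Cam) argument: collapse the test to a single randomized decision rule, bound the worst-case error from below by the average of the two error probabilities, and control that average via the variational characterization of total variation distance.

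First I would regard the test as a randomized map from the observed data $X$ to $\{u,v\}$ and encode it by the test function $\phi(x):=\Pr[\,T(x)=v\,]\in[0,1]$, so that $\Pr[\,T(x)=u\,]=1-\phi(x)$; this device subsumes randomized tests uniformly. Here $X$ denotes the entire observation (the $m$-tuple), and $u,v$ stand for the laws of $X$ under the two hypotheses --- in the application to Theorem~\ref{thrm:robust_lb} these are precisely $\mathcal{D}_1^m$ and $\mathcal{D}_2^m$. The type-I error is then $\alpha:=\mathbb{E}_u[\phi]$ and the type-II error is $\beta:=\mathbb{E}_v[1-\phi]=1-\mathbb{E}_v[\phi]$.

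Next I would write $\max(\alpha,\beta)\ge\tfrac12(\alpha+\beta)=\tfrac12\big(1-(\mathbb{E}_v[\phi]-\mathbb{E}_u[\phi])\big)$ and invoke the identity $TV(u,v)=\sup_{0\le g\le 1}\big(\mathbb{E}_v[g]-\mathbb{E}_u[g]\big)$ (equivalently $TV(u,v)=\sum_x (v(x)-u(x))_+$); since $0\le\phi\le 1$ this yields $\mathbb{E}_v[\phi]-\mathbb{E}_u[\phi]\le TV(u,v)$, and hence $\max(\alpha,\beta)\ge\tfrac12(1-TV(u,v))$, which is the claim.

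There is no genuinely hard step here --- it is a two-line computation --- and the only points requiring care are the test-function device that handles randomization in one stroke and being explicit that the total-variation term refers to the joint laws of the observed data (so that in our setting the bound reads $\tfrac12(1-TV(\mathcal{D}_1^m,\mathcal{D}_2^m))$), not to single-sample marginals. Combined with the earlier estimate $TV(\mathcal{D}_1^m,\mathcal{D}_2^m)\le\tfrac13$, this forces a worst-case error of at least $\tfrac13$ for any test separating $\mathcal{D}_1^m$ from $\mathcal{D}_2^m$, completing the contradiction in Theorem~\ref{thrm:robust_lb}. One can additionally note tightness: the likelihood-ratio rule $\phi=1$ on $\{v>u\}$ and $\phi=0$ elsewhere attains $\alpha+\beta=1-TV(u,v)$.
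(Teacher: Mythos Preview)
Your proposal is correct and follows essentially the same two-point argument as the paper: lower-bound the maximum error by the average $\tfrac12(\alpha+\beta)$ and then invoke the variational characterization of $TV$. The only minor difference is that you encode a randomized test by a $[0,1]$-valued function $\phi$, whereas the paper writes the test as a deterministic acceptance set $A\subseteq\mathcal{X}$; your formulation is slightly cleaner in that it handles the ``possibly randomized'' clause of the statement directly, but the underlying computation is identical.
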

\begin{proof}
    A test is given by a set $A \subseteq \mathcal{X}$ where we declare $u$. We declare $v$ if the sample falls in the complementary set  $A^c$. The maximum probability of error is given by
    \begin{align*}
       p(\text{error}) = \max ( u(A^c), v(A) ).
    \end{align*}
    The maximum error probability can be lower bounded by the average error probability (when each distribution is picked with probability 0.5)
    \begin{align*}
        \max (u(A^c), v(A)) &\ge \frac{1}{2}(u(A^c) + v(A)) \\
        &= \frac{1}{2}(1-u(A)+v(A)) \\
        &\ge \frac{1-TV(u,v)}{2}.
    \end{align*}
\end{proof}
We again use this lemma with $u=\mathcal{D}_0^m$, $v=\mathcal{D}_1^m$. Thus, for any test $A \subset \mathcal{X}^m$ we have
\begin{align*}
    \max ( \max_{p \in \mathcal{D}_0} p(A^c), \max_{\bar{p} \in \mathcal{D}_1} \bar{p}(A) ) &\ge \max (\mathcal{D}_0^m(A^c), \mathcal{D}_1^m(A)) \\
    &\ge \frac{1-TV(\mathcal{D}_0^m,\mathcal{D}_1^m)}{2} \\
    &\ge \frac{1}{3}.
\end{align*}
Thus, if a $\gamma$-robust test with $\gamma < \frac{\sqrt{2}}{\sqrt{2}-1}$ existed then we could use it to distinguish between the case where $p \in \mathcal{D}_0$ and where $p \in \mathcal{D}_1$ with probability of error at most $\frac{1}{3}$. This leads to a contradiction and completes the proof.
\end{proof}

\subsection{When is the lower bound tight?}
Let $\text{supp}(p)$ denote the support of distribution $p$, i.e $\text{supp}(p) = \{ x \in \mathcal{X}: p(x)>0 \}$. We now show that our lower bound is tight if $\text{supp}(p_0) \cap \text{supp}(p_1) = \emptyset$. Let $S_1 := \text{supp}(p_0)$, $S_2 := \text{supp}(p_1)$ and $S := \mathcal{X} \backslash (S_1 \cup S_2)$.  In this scenario, any observed sample that is not in either $S_1$ or $S_2$ is not informative for the problem at hand. Thus, a natural test would be to declare $H_0$ (resp. $H_1$) if the fraction of samples falling in $S_1$ (resp. $S_2$) is greater than $\frac{1}{2}$. Let
\begin{align*}
    T(X^m) = \frac{1}{m} \sum_{i=1}^{m} \mathrm{1}[X_i \in S_1] - \mathrm{1}[X_i \in S_2] 
\end{align*}
Thus, we have
\begin{align*}
    \mathbb{E}[T(X^m)] = p(S_1) - p(S_2)
\end{align*}
We would like to show that if $\frac{\sqrt{2}}{\sqrt{2}-1} H^2(p,p_0) \le H^2(p,p_1)$ then $\mathbb{E}[T]>0$ and if $H^2(p,p_0) \le \frac{\sqrt{2}}{\sqrt{2}-1}H^2(p,p_1)$ then $\mathbb{E}[T]<0$. We have
\begin{align*}
    H^2(p,p_0) &= \frac{1}{2} \sum_{x \in \mathcal{X}} (\sqrt{p(x)}-\sqrt{p_0(x)})^2 \\
    &= \frac{1}{2} \sum_{x \in S_1} (\sqrt{p(x)}-\sqrt{p_0(x)})^2 + \sum_{x \in S_2} (\sqrt{p(x)}-\sqrt{p_0(x)})^2 + \sum_{x \in S} (\sqrt{p(x)}-\sqrt{p_0(x)})^2 \\
    &\overset{(a)}{=} \frac{1}{2} \left[ \sum_{x \in S_1} p_0(x) - \sum_{x \in S_1} 2\sqrt{p(x)}\sqrt{p_0(x)} + \sum_{x \in S_1} p(x) + \sum_{x \in S_2} p(x) + \sum_{x \in S} p(x) \right] \\
    &= \frac{1}{2} \left[ 2 - 2\sum_{x \in S_1} \left(\sqrt{p(x)}\sqrt{p_0(x)} \right) \right] \\
    &\overset{(b)}{\ge} 1 - \sum_{x \in S_1} \sqrt{p(x)}  \\
    &\overset{(c)}{\ge} 1 - \sqrt{\sum_{x \in S_1} p(x)} \\
    &= 1 - \sqrt{p(S_1)}
\end{align*}
where $(a)$ follows from the definition of $S_1$, $(b)$ follows from the fact that $\sqrt{p_0(x)} \le 1$ and $(c)$ follows from the concavity of the square root function.
Also,
\begin{align*}
    H^2(p,p_1) &= \frac{1}{2}\sum_{x \in \mathcal{X}} (\sqrt{p(x)}-\sqrt{p_1(x)})^2 \\
    &\le \frac{1}{2} \sum_{x \in \mathcal{X}} p(x) + p_1(x) -2 \sqrt{p(x)}\sqrt{p_1(x)} \\
    &= \frac{1}{2} \sum_{x \in \mathcal{X}} p(x) + p_1(x) \\
    &\le 1.
\end{align*}
Thus, under $H_0$, we have
\begin{align*}
    \frac{\sqrt{2}}{\sqrt{2}-1} \left( 1 - \sqrt{p(S_1)} \right) \le \frac{\sqrt{2}}{\sqrt{2}-1} H^2(p,p_0) \le H^2(p,p_1) \le 1
\end{align*}
Rearranging terms, we get $p(S_1) \ge \frac{1}{2}$ or $\mathbb{E}[T(X^m)] \ge 0$. Similarly, we can show that $\mathbb{E}[T(X^m)] \le 0$ under $H_1$. 

Under the constraint $\text{supp}(p_0) \cap \text{supp}(p_1) = \emptyset$, the optimal slack factor for TV distance goes down from 3 to 2. This indicates that our lower bound might not be tight.

\section{Lower bound on testing with respect to symmetric $\chi^2$ distance}
The symmetric $\chi^2$ distance between two distributions $p_0$ and $p_1$ is defined by
\begin{align*}
    \chi^2(p_0,p_1) &:= \bigg \| \frac{p_0-p_1}{\sqrt{p_0+p_1}} \bigg \|_2^2.
\end{align*}
It can be shown that
\begin{align*}
   \frac{1}{4} \chi^2(p_0,p_1) \le H^2(p_0,p_1) \le \frac{1}{2} \chi^2(p_0,p_1)
\end{align*}
We can study the robust testing problem with symmetric $\chi^2$ distance. It is easy to see that Baraud's test~\eqref{eqn:baraud_test} implies an upper bound of $\frac{2(\sqrt{2}+1)}{\sqrt{2}-1}$ for $\gamma$-robust symmetric $\chi^2$-testing.
\begin{align*}
    \mathbb{E}[T(X)] &\ge (2-\sqrt{2}) H^2(p,p_1) - (2+\sqrt{2}) H^2(p,p_0) \\
    &\ge \frac{2-\sqrt{2}}{4} \chi^2(p,p_1) - \frac{2+\sqrt{2}}{2}  \chi^2(p,p_0) \\
\end{align*}
Thus, $\mathbb{E}[T(X)] \ge 0$ if $\chi^2(p,p_1) \ge \frac{2(\sqrt{2}+1)}{\sqrt{2}-1} \chi^2(p,p_0)$. Likewise, we can show that $\mathbb{E}[T(X)] \le 0$ if $\chi^2(p,p_0) \ge \frac{2(\sqrt{2}+1)}{\sqrt{2}-1} \chi^2(p,p_1)$.
We can get a lower bound of $3$ for robust symmetric $\chi^2$ testing with respect to the symmetric $\chi^2$ distance. 
\blue{\begin{theorem} \label{thrm:robust_lb_chi_squared}
    For every $\gamma < 3$, there is a class $\mathcal{P}=\{p_0,p_1\}$ which is not $\gamma$-robustly testable (with respect to the symmetric $\chi^2$ distance).
\end{theorem}
\begin{proof}
    The construction of the lower bound is exactly the same (our choice of parameters is $b=1, a_2=1, a_1 \rightarrow \infty$. We only need to show the following. For any distribution $p$ in the perturbed family around $p_0$, we have
\begin{align*}
    \chi^2(p,p_0) &= \frac{(a_1-0)^2}{a_1} \left( \frac{1}{2N_m} \right)\left( \frac{1}{a_1}N_m \right) + \frac{(2-1)^2}{2+1} \left( \frac{1}{2N_m} \right)N_m  \\
    &= \frac{1}{2} \left( 1 + \frac{1}{3} \right).
\end{align*}
Also,
\begin{align*}
    \chi^2(p,p_1) &= \frac{(2-0)^2}{2} \frac{1}{2N_m} \left(1-\frac{1}{a_1}\right)N_m + \frac{(a_1-2)^2}{a_1+2} \frac{1}{2N_m} \frac{1}{a_1} N_m + \frac{(1-0)^2}{1} \frac{1}{2N_m} N_m \\
    &= \frac{1}{2} \left( 2\left(1-\frac{1}{a_1}\right) +  \frac{(a_1-2)^2}{a_1(a_1+2)} + 1 \right).
\end{align*}
We let $a_1 \rightarrow \infty$. Thus, we have
\begin{align*}
    \lim_{a_1 \rightarrow \infty} \frac{\chi^2(p,p_1)}{\chi^2(p,p_0)} &= \frac{2+1+1}{\frac{4}{3}} \\
    &= 3.
\end{align*}
Likewise, for any distribution $p$ in the perturbed family around $p_1$. The rest of the proof is similar to the Hellinger case.
\end{proof}}
\section{Alternative Formulation of Robustness} \label{sec:alt_test}
In previous sections, we let the two hypotheses as  $H_0 : \gamma d(p,p_0) < d(p,p_1)$ and $ H_1 : d(p,p_0) > \gamma d(p,p_1)$. An alternative way to define robustness is by letting each hypothesis be a Hellinger ball around a fixed distribution. More concretely, let $p_0$ and $p_1$ be our fixed distributions. The true distribution $p$ could lie in a Hellinger squared ball of radius $r$ around $p_0$ or $p_1$. As before, $p_0$, $p_1$, and $p$ are on some common domain $\mathcal{X}$.
\begin{align*}
    H_0 &: H^2(p,p_0) \le r \\
    H_1 &: H^2(p,p_1) \le r.
\end{align*}
Naturally, the radius $r$ should be such that the two balls do not intersect. Let $r^*$ be the radius when the two balls intersect. We want a test which distinguishes between the two hypotheses as long as $r < r*$. Observe that $H_0$ and $H_1$ are convex sets. Hence, it is known that the approach of working with likelihood ratio tests with respect to the closest pair works \cite[Chapter 7]{gine2021mathematical}. We show that a simple modification of Baraud's test~\eqref{eqn:baraud_test} can distinguish between $H_0$ and $H_1$ as long as $r < r^*$. We now characterize $r^*$. Recall that the Hellinger distance is related to the Bhattacharya distance in the following manner:
\begin{align*}
    H^2(p_0,p_1) &= 1-B(p_0,p_1) \\
    &= 1- \sum_{x \in \mathcal{X}} \sqrt{p_0(x)p_1(x)}.
\end{align*}
Since $\sqrt{p_0}$ and $\sqrt{p_1}$ reside on a unit $\ell_2$ ball, we denote $\cos \theta := B(p_0,p_1)$. A geodesic (shortest path) in the Hellinger distance from $p_0$ and $p_1$ is given by \cite[Chapter 7]{gine2021mathematical}
\begin{align*}
    \sqrt{q_{\phi}} = \frac{\sin{(\theta-\phi)}\sqrt{p_0} + \sin{\phi}\sqrt{p_1}}{\sin{\theta}}.
\end{align*}
It can be verified that $q_{\phi}$ is a valid probability distribution using the fact that $\sin^2{(\theta-\phi)} + \sin^2{\phi} + 2\sin{(\theta-\phi)}\sin{\phi}\cos{\theta} = \sin^2{\theta}$. Here, $\phi \in [0,\theta]$ with $q_0 = p_0$ and $q_{\theta}=p_1$. Thus, we can define $u$ to be the "Hellinger midpoint" of $p_0$ and $p_1$, that is,
\begin{align*}
    \sqrt{u} &:= \sqrt{q_{\frac{\theta}{2}}} \\
    &=\frac{\sin{\frac{\theta}{2}}}{\sin{\theta}} (\sqrt{p_0} + \sqrt{p_1}).
\end{align*}
Thus,
\begin{align*}
    r^* &= H^2(p_0,u) \\
    &= 1 -  \sum_{x \in \mathcal{X}} \sqrt{p_0(x)} \sqrt{u(x)} \\
    &= 1 - \cos \frac{\theta}{2}. 
\end{align*}
Define $T$ to be the following test statistic (similar to Baraud but using the Hellinger midpoint rather than the Euclidean midpoint).
\begin{equation} \label{eqn:alt_test}
    T(X^m) := \frac{1}{m} \sum_{i=1}^{m} \left( \sqrt{\frac{p_0(X_i)}{u(X_i)}} - \sqrt{\frac{p_1(X_i)}{u(X_i)}} \right)
\end{equation}

\begin{theorem} \label{lem:alt_test}
    The test described in \eqref{eqn:alt_test} can distinguish between $H_0$ and $H_1$ as long as $r < 1 - \cos \frac{\theta}{2}$ where $\theta = \cos^{-1} B(p_0, p_1)$.
\end{theorem}
\begin{proof}
   We analyze the expected value of $T(X^m)$ with respect to the unknown distribution $p$. By linearity of expectation, it suffices to analyze for $m=1$.
\begin{align*}
    E[T(X)] = \sum_{x \in \mathcal{X}} p(x) \left( \sqrt{\frac{p_0(x)}{u(x)}} - \sqrt{\frac{p_1(x)}{u(x)}} \right).
\end{align*}
We now add and subtract $2\sum \sqrt{p(x)p_0(x)}$, $2\sum \sqrt{p(x)p_1(x)}$ and $\sum\sqrt{p_0(x)u(x)}$ (the last term is also equal to  $\sum\sqrt{p_1(x)u(x)}$ by our choice of $u$). Thus, we have
\begin{align*}
    &E[T(X)] \\
    &= 2\sum_{x \in \mathcal{X}} \sqrt{p(x)p_0(x)} - 2\sum_{x \in \mathcal{X}} \sqrt{p(x)p_1(x)} + \sum_{x \in \mathcal{X}} \frac{\sqrt{p_0(x)}}{\sqrt{u(x)}}(\sqrt{p(x)}-\sqrt{u(x)})^2 - \sum_{x \in \mathcal{X}} \frac{\sqrt{p_1(x)}}{\sqrt{u(x)}}(\sqrt{p(x)}-\sqrt{u(x)})^2 \\
    &= 2\sum_{x \in \mathcal{X}} \sqrt{p(x)p_0(x)} - 2\sum_{x \in \mathcal{X}} \sqrt{p(x)p_1(x)} + \sum_{x \in \mathcal{X}} \frac{\sqrt{p_0(x)}-\sqrt{p_1(x)}}{\sqrt{u(x)}}(\sqrt{p(x)}-\sqrt{u(x)})^2 \\
    &= 2\sum_{x \in \mathcal{X}} \sqrt{p(x)p_0(x)} - 2\sum_{x \in \mathcal{X}} \sqrt{p(x)p_1(x)} + \sum_{x \in \mathcal{X}} \frac{\sqrt{p_0(x)}+\sqrt{p_1(x)}-2\sqrt{p_1(x)}}{\sqrt{u(x)}}(\sqrt{p(x)}-\sqrt{u(x)})^2 \\
    &\overset{(a)}{=} 2\sum_{x \in \mathcal{X}} \sqrt{p(x)p_0(x)} - 2\sum_{x \in \mathcal{X}} \sqrt{p(x)p_1(x)} + \frac{\sin{\theta}}{\sin{\frac{\theta}{2}}} \sum_{x \in \mathcal{X}} \frac{\sqrt{p_0(x)}+\sqrt{p_1(x)}-2\sqrt{p_1(x)}}{\sqrt{p_0(x)}+\sqrt{p_1(x)}}(\sqrt{p(x)}-\sqrt{u(x)})^2 \\
    &\overset{(b)}{\ge} 2\sum_{x \in \mathcal{X}} \sqrt{p(x)p_0(x)} - 2\sum_{x \in \mathcal{X}} \sqrt{p(x)p_1(x)} - \frac{\sin{\theta}}{\sin{\frac{\theta}{2}}} \sum_{x \in \mathcal{X}} (\sqrt{p(x)}-\sqrt{u(x)})^2 \\
    &= 2\sum_{x \in \mathcal{X}} \sqrt{p(x)p_0(x)} - 2\sum_{x \in \mathcal{X}} \sqrt{p(x)p_1(x)} - 2\frac{\sin{\theta}}{\sin{\frac{\theta}{2}}} (1-\sum_{x \in \mathcal{X}}\sqrt{p(x)u(x)}) \\
    &= 2\sum_{x \in \mathcal{X}} \sqrt{p(x)p_0(x)} - 2\sum_{x \in \mathcal{X}} \sqrt{p(x)p_1(x)} - 2\frac{\sin{\theta}}{\sin{\frac{\theta}{2}}} \left( 1-\sum_{x \in \mathcal{X}}\sqrt{p(x)} \frac{\sin{\frac{\theta}{2}}}{\sin{\theta}}(\sqrt{p_0(x)}+\sqrt{p_1(x)}) \right) \\
    &= 4\sum_{x \in \mathcal{X}} \sqrt{p(x)p_0(x)} - 2\frac{\sin{\theta}}{\sin{\frac{\theta}{2}}} \\
    &= 4 \left( \sum_{x \in \mathcal{X}} \sqrt{p(x)p_0(x)} - \cos{\frac{\theta}{2}} \right)
\end{align*}
$(a)$ follows from the definition of $\sqrt{u}$, $(b)$ follows from the fact that $\frac{\sqrt{p_1(x)}}{\sqrt{p_0(x)}+\sqrt{p_1(x)}} \le 1$. Thus, if $\sum_{x \in \mathcal{X}} \sqrt{p(x)p_0(x)} > \cos{\frac{\theta}{2}}$, $E[T(X)]>0$. If $\sum \sqrt{pp_1} > \cos{\frac{\theta}{2}}$, $E[T(X)]<0$. Or, in other words, if $H^2(p,p_0) < 1 - \cos{\frac{\theta}{2}}$, then $E[T(X)] > 0$. If $H^2(p,p_1) < 1 - \cos{\frac{\theta}{2}}$, then $E[T(X)] < 0$. Thus, our test can distinguish between $H_0$ and $H_1$ as long as $r < r^* = 1 - \cos{\frac{\theta}{2}}$. 
\end{proof}
 
\section{Discussion}
The problem of exactly characterizing the optimal slack factor $\gamma^*$ remains open. When the distance metric is TV, it is known that a randomized test can reduce $\gamma^*$ from 3 to 2 \cite{mahalanabis2007density}. It would be interesting to see an analogous result for the Hellinger distance. Finally, quantum analogoues of this problem (with distributions replaced with quantum states) is a direction we would love to explore.

\section{Acknowledgments}
We thank Ankit Pensia for bringing to our attention the following observation. Let $d(.,.)$ be any distance metric. Consider a different formulation of the robust testing problem. $H_0': d(p,p_0) \le \frac{d(p_0,p_1)}{C}$ vs $H_1': d(p,p_1) \le \frac{d(p_0,p_1)}{C}$. Birge (cite?) gives a test that works for any $C>2$. This implies a test for the problem $H_0: (C+1)d(p,p_0) \le d(p,p_1)$ vs $H_1: d(p,p_0) \ge (C+1)d(p,p_1)$. Observe that
\begin{align*}
    (C+1)d(p,p_0) &\le d(p,p_1) \le d(p,p_0) + d(p_0,p_1) \\
    C d(p,p_0) &\le d(p_0,p_1).
\end{align*}
Thus, $H_0 \implies H_0'$. Likewise, $H_1 \implies H_1'$. Since Birge's test works for any $C>2$, it also works for any $C+1>3$ for our problem. In particular, for the Hellinger distance, this bound is better than the one in \cite{suresh2021robust} but not as good as the one in \cite{baraud2011estimator}.

\bibliographystyle{ieeetr}

\bibliography{refs}

@inproceedings{bousquet2019optimal,
  title={The optimal approximation factor in density estimation},
  author={Bousquet, Olivier and Kane, Daniel and Moran, Shay},
  booktitle={Conference on Learning Theory},
  pages={318--341},
  year={2019},
  organization={PMLR}
}

@inproceedings{suresh2021robust,
  title={Robust hypothesis testing and distribution estimation in hellinger distance},
  author={Suresh, Ananda Theertha},
  booktitle={International Conference on Artificial Intelligence and Statistics},
  pages={2962--2970},
  year={2021},
  organization={PMLR}
}

@article{baraud2011estimator,
  title={Estimator selection with respect to Hellinger-type risks},
  author={Baraud, Yannick},
  journal={Probability theory and related fields},
  volume={151},
  pages={353--401},
  year={2011},
  publisher={Springer}
}

@incollection{yu1997assouad,
  title={Assouad, {F}ano, and {L}e {C}am},
  author={Yu, Bin},
  booktitle={Festschrift for Lucien Le Cam: research papers in probability and statistics},
  pages={423--435},
  year={1997},
  publisher={Springer}
}

@article{mahalanabis2007density,
  title={Density estimation in linear time},
  author={Mahalanabis, Satyaki and Stefankovic, Daniel},
  journal={arXiv preprint arXiv:0712.2869},
  year={2007}
}

@book{gine2021mathematical,
  title={Mathematical foundations of infinite-dimensional statistical models},
  author={Gin{\'e}, Evarist and Nickl, Richard},
  year={2021},
  publisher={Cambridge university press}
}

@article{brandao2020adversarial,
  title={Adversarial Hypothesis Testing and a Quantum {S}tein's Lemma for Restricted Measurements},
  author={Brand{\~a}o, Fernando GSL and Harrow, Aram W and Lee, James R and Peres, Yuval},
  journal={IEEE Transactions on Information Theory},
  volume={66},
  number={8},
  pages={5037--5054},
  year={2020},
  publisher={IEEE}
}

@article{fangwei1996hypothesis,
  title={Hypothesis testing for arbitrarily varying source},
  author={Fangwei, Fu and Shiyi, Shen},
  journal={Acta Mathematica Sinica},
  volume={12},
  number={1},
  pages={33--39},
  year={1996},
  publisher={Springer}
}

@book{devroye2001combinatorial,
  title={Combinatorial methods in density estimation},
  author={Devroye, Luc and Lugosi, G{\'a}bor},
  year={2001},
  publisher={Springer Science \& Business Media}
}

@article{huber1965robust,
  title={A robust version of the probability ratio test},
  author={Huber, Peter J},
  journal={The Annals of Mathematical Statistics},
  pages={1753--1758},
  year={1965},
  publisher={JSTOR}
}

@article{gul2017minimax,
  title={Minimax robust hypothesis testing},
  author={G{\"u}l, G{\"o}khan and Zoubir, Abdelhak M},
  journal={IEEE Transactions on Information Theory},
  volume={63},
  number={9},
  pages={5572--5587},
  year={2017},
  publisher={IEEE}
}

@article{levy2008robust,
  title={Robust hypothesis testing with a relative entropy tolerance},
  author={Levy, Bernard C},
  journal={IEEE Transactions on Information Theory},
  volume={55},
  number={1},
  pages={413--421},
  year={2008},
  publisher={IEEE}
}

\end{document}